\documentclass[10pt]{amsart}

\usepackage[utf8x]{inputenc}
\usepackage{amsfonts}
\usepackage{amsmath}
\usepackage{amssymb}
\usepackage{amsthm}
\usepackage{mathrsfs}
\usepackage{latexsym}
\usepackage{graphicx}
\usepackage{bm}
\usepackage{inputenc}
\usepackage{enumerate}
\usepackage{enumitem}

\usepackage{hyperref}
\usepackage{graphicx}

\DeclareMathOperator{\Nor}{Nor}

\DeclareMathOperator{\Unp}{Unp}
\DeclareMathOperator{\dmn}{dmn}

\swapnumbers

\allowdisplaybreaks

\newtheorem{Lemma}{Lemma}[section]
\newtheorem{Corollary}[Lemma]{Corollary}
\newtheorem{Theorem}[Lemma]{Theorem}
\newtheorem*{Theorem*}{Theorem}

\theoremstyle{definition}

\newtheorem{Definition}[Lemma]{Definition}

\theoremstyle{remark}

\newtheoremstyle{proof*}
{3pt}
{3pt}
{\rmfamily}
{}
{\bfseries}
{.}
{.5em}
{\thmnote{#3}}
\theoremstyle{proof*}
\newtheorem*{proof*}{}

\title{When all parallel sets of a $ C^1 $-hypersurface are nowhere $ C^1 $-regular}
\author{Mario Santilli}

\begin{document}

\maketitle

\begin{abstract}
We prove a necessary and sufficient condition for a $ C^1 $-hypersurface to have all parallel sets nowhere $ C^1 $-regular. As a corollary, we deduce that for a generic $ C^1 $-regular convex body all interior parallel bodies have nowhere $ C^1 $-regular boundaries.
\end{abstract}

\section{Introduction}

It is well known (cf.\ \cite[Theorem 4.8]{Federer1959}) that if $ K \subseteq \mathbf{R}^n $ is a set of positive reach $ \rho $, then for every $ 0 < r < \rho $ the parallel set $ \delta_K^{-1}(r) $ is an embedded $ C^{1} $-hypersurface with Lipschitzian unit-normal (in short, $C^{1,1} $-hypersurface). Here the symbol  $\delta_S $ denotes the distance function from $ S \subseteq \mathbf{R}^n $. In particular, if $ K $ is an arbitrary convex body then all exterior parallel bodies have $ C^{1,1} $-boundary; moreover,  if $ \Sigma $ is a closed embedded $ C^{1,1} $-hypersurface then $ \delta_{\Sigma}^{-1}(r) $ is an embedded $ C^{1,1}$-hypersurface for all positive $ r $ sufficiently small.  

In sharp contrast with this nice picture,  we prove in this short note the following two results. We recall that a set $ S \subseteq \mathbf{R}^n $ is said to be nowhere $ C^1 $-regular if for every open set $ U \subseteq \mathbf{R}^n $ the set $ U \cap S $ fails to be a $ C^1 $-regular embedded hypersurface.
\begin{Theorem*}[\protect{cf.\ Corollary \ref{cor 1}}]
	There exist $ C^{1} $ hypersurfaces $ \Sigma \subseteq \mathbf{R}^n $, given as graphs of  $ C^{1} $ functions $ f : \mathbf{R}^{n-1} \rightarrow \mathbf{R} $, such that $ \delta_{\Sigma}^{-1}(r) $ is nowhere $ C^1 $-regular for all $ r > 0 $.
\end{Theorem*}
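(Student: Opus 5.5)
We build $\Sigma$ as the graph of a $C^1$ function $f:\mathbf{R}^{n-1}\to\mathbf{R}$ whose gradient has a nowhere-Lipschitz-from-one-side modulus of continuity on a dense set. The guiding mechanism is this. If $\nabla f$ fails, near a point $a$, to satisfy a one-sided bound $f(x)\ge f(a)+\nabla f(a)\cdot(x-a)-C|x-a|^2$ for every $C$, then no ball of positive radius touches $\Sigma$ at $(a,f(a))$ from below. Consequently the point $(a,f(a))$ has no "normal segment" on that side, and the nearest-point structure of $\delta_\Sigma$ degenerates there. We will use the complementary fact: a point $p\in\delta_\Sigma^{-1}(r)$ near which the level set is a $C^1$ hypersurface forces regularity of $\Sigma$ on the nearest-point side. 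Concretely, if $U\cap\delta_\Sigma^{-1}(r)$ is a $C^1$ hypersurface, then for $q\in U\cap\delta_\Sigma^{-1}(r)$ the ball $B(q,r)$ touches $\Sigma$ at its nearest points. The $C^1$-regularity of the level set should force the nearest point $\xi(q)$ to be unique and to vary continuously. Then the union of the balls $B(q,r)$, for $q$ in the level set, sweeps out an open portion of $\Sigma$, namely the image $\xi(U\cap\delta_\Sigma^{-1}(r))$. Every point of that portion admits a touching ball of radius $r$ from one fixed side.

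The steps are as follows. First, prove a lemma: if $\delta_\Sigma^{-1}(r)\cap U$ is a $C^1$ embedded hypersurface, then there is a nonempty relatively open set $W\subseteq\Sigma$ such that every point of $W$ is touched by a ball of radius $r$ lying in one of the two components of $\mathbf{R}^n\setminus\Sigma$. To prove it, use semiconcavity of $\delta_\Sigma^2$ away from $\Sigma$. Where the level set is $C^1$, the distance function $\delta_\Sigma$ has to be differentiable: a corner of a semiconcave function is concave-type, so it would produce a non-$C^1$ kink in the level set, except where the gradient can be defined. Differentiability gives unique nearest points and a continuous projection $\xi$. Invariance of domain, applied to $q\mapsto\xi(q)$ (injective, since the normal segments of distinct points do not cross before $r$), gives openness of the image in $\Sigma$. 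Second, construct $f$ so that no nonempty open subset of $\Sigma$ has touching balls of a uniform radius from either side, on a dense set of base points. Take a countable dense set $\{a_k\}\subseteq\mathbf{R}^{n-1}$. Set $f=\sum_k 2^{-k}\,\varepsilon_k\,\phi(x-a_k)$ with $\phi(x)=|x|^{3/2}$ cut off smoothly. Arrange the signs so that each $a_k$ appears with both signs in two nearby copies. Alternatively, use $\phi_k(x)=\pm|x-a_k|^{3/2}$, where a $+$ bump kills touching balls from above and a $-$ bump kills them from below. Each term is $C^1$, and the series converges in $C^1$. Third, check that at each $a_k$ the lower (or upper) second-order difference quotient is $-\infty$. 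This holds because the singular term contributes $\mp|h|^{-1/2}\to\mp\infty$, while the remaining sum must be controlled.

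The main obstacle is that third step. The other terms of the series are themselves not $C^{1,1}$ at $a_k$, and they could in principle cancel the singularity. I would handle this by choosing the coefficients inductively. After fixing the first $k-1$ terms, choose $a_k$ and a tiny coefficient $c_k$ so that the sum of all later terms has $C^1$-norm less than $c_k$ times what is needed. Then pass to a sequence $h_j\to0$ along which the $k$-th term dominates. Since any open $W\subseteq\Sigma$ contains graph points over some $a_k$ with both signs available, the density of the $a_k$ gives a contradiction with the lemma for every $r$.
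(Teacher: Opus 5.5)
Your plan runs parallel to the paper in Step 1, which is the converse half of Theorem~\ref{main th}. It departs in Step 2: the paper simply invokes Kohn's $C^1$ graph, while you build an explicit example. However, two steps do not work as written.

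\textbf{Step 1.} You claim that $C^1$-regularity of $U\cap\delta_\Sigma^{-1}(r)$ forces $\delta_\Sigma$ to be differentiable there. The semiconcavity heuristic does not justify this, and in that generality the claim is false. Take $\Sigma=\{x_n=0\}\cup\{x_n=2r\}$. The level set contains the smooth hyperplane $\{x_n=r\}$, yet each point $q$ of it has two nearest points $q\pm re_n$. A concave corner whose superdifferential contains $0$ need not produce a kink.

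What the hypothesis actually gives is weaker. If $a\in\bm{\xi}_\Sigma(q)$, then $\delta_\Sigma<r$ on $\bm{U}(a,r)$, so this ball misses the level set and touches it at $q$. Hence $q-a=\pm r\nu(q)$, that is, $\bm{\xi}_\Sigma(q)\subseteq\{q+r\nu(q),q-r\nu(q)\}$. You must then deal with the antipodal case, in one of two ways:
\begin{enumerate}
\item Argue as the paper does. If no point of $U\cap\delta_\Sigma^{-1}(r)$ lies in $\Unp(\Sigma)$, then $q\mapsto q+r\nu(q)$ is still a continuous selection of nearest points. Otherwise, weak continuity of $\bm{\xi}_\Sigma$ propagates uniqueness to a neighbourhood.
\item Use the graph structure, which your argument never invokes. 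For $q$ in the epigraph, every $a\in\bm{\xi}_\Sigma(q)$ satisfies $q-a=r(-\nabla f(a'),1)/(1+|\nabla f(a')|^2)^{1/2}$, which has positive last coordinate. This cannot hold for both $q+r\nu$ and $q-r\nu$, so nearest points are unique.
\end{enumerate}

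\textbf{Step 3.} Making the tail small in $C^1$-norm cannot make the $k$-th term dominate. $C^1$ control bounds the first-order Taylor remainder of the tail only by $O(|h|)$, whereas you must beat $c_k|h|^{3/2}$ as $h\to0$. Also, with your initial weights $2^{-k}$ the tail is as large as $c_k$.

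The right quantity is the $\tfrac12$-H\"older seminorm of the gradient. Every translate of the cut-off $|x|^{3/2}$ satisfies $[\nabla\phi]_{1/2}\le L$. Therefore $T_k=\sum_{j>k}\varepsilon_jc_j\phi(\cdot-a_j)$ obeys $|T_k(a_k+h)-T_k(a_k)-\nabla T_k(a_k)\cdot h|\le\tfrac23L\bigl(\sum_{j>k}c_j\bigr)|h|^{3/2}$. Taking $c_j=\lambda^j$ with $\lambda$ small makes this at most $\tfrac12c_k|h|^{3/2}$.

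The finitely many earlier terms are smooth near $a_k$, provided $a_j\neq a_k$, and contribute $O(|h|^2)$. So the first-order remainder of $f$ at $a_k$ has sign $\varepsilon_k$ and size at least $\tfrac12c_k|h|^{3/2}-C|h|^2$ for all small $h$. No subsequence is needed.

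With these two repairs your argument is a valid, self-contained alternative to citing Kohn. The paper's Theorem~\ref{main th} has the advantage of covering arbitrary open sets with $C^1$ boundary, where the antipodal case genuinely occurs.
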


In  the next result the word "generic" is understood in the sense of Baire category (cf. \cite[1.8 and 2.7]{Schneiderbook}).

\begin{Theorem*}[\protect{cf.\ Corollary \ref{cor 2}}]
	For a generic $ C^1 $-regular convex body $ K \subseteq \mathbf{R}^n $ all interior parallel bodies have nowhere $ C^1 $-regular boundaries.
\end{Theorem*}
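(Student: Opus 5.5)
The plan is to reduce the statement to a classical genericity theorem of Zamfirescu on curvatures of typical convex bodies. The reduction uses a rolling-ball argument: a $C^1$ piece of an interior parallel boundary forces an open piece of $\partial K$ to carry interior tangent balls of a fixed radius.

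\emph{Step 1 (genericity bookkeeping).} Let $\mathcal K$ be the space of convex bodies with the Hausdorff metric; it is a Baire space. By Klee and Gruber the $C^1$-regular, strictly convex bodies form a residual set $\mathcal R_1 \subseteq \mathcal K$. By Zamfirescu there is a residual set $\mathcal R_2 \subseteq \mathcal K$ of bodies $K$ with the following property: the points $x \in \partial K$ at which the upper curvature is $+\infty$ in every tangent direction are dense in $\partial K$ (indeed residual in $\partial K$). The set $\mathcal R_1 \cap \mathcal R_2$ is then residual in $\mathcal K$. Since $\mathcal R_1$ is itself residual in $\mathcal K$, the set $\mathcal R_1 \cap \mathcal R_2$ is also residual in the space of $C^1$-regular convex bodies. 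It therefore suffices to prove that for every $K \in \mathcal R_1 \cap \mathcal R_2$ and every $r>0$ with $K_{-r} := \{ y \in K : \delta_{\partial K}(y) \geq r \}$ nonempty, the boundary $\partial K_{-r} = K \cap \delta_{\partial K}^{-1}(r)$ is nowhere $C^1$-regular.

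\emph{Step 2 (footpoints on a regular piece).} Argue by contradiction: suppose $U$ is open and $U \cap \partial K_{-r}$ is a nonempty embedded $C^1$-hypersurface. For $y$ in this set, $B(y,r) \subseteq K$ touches $\partial K$ at the footpoints of $y$. If $y$ had two footpoints, then $\delta_{\partial K}$, which is concave on $K$ since $K$ is convex, would have a genuine corner at $y$. In that case $\partial K_{-r}$ would have two distinct supporting normals at $y$, which contradicts $C^1$-regularity. Hence each $y \in U \cap \partial K_{-r}$ has a unique footpoint $\xi(y)$, and $\xi$ is continuous by the usual compactness argument. The map $\xi$ is also injective. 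Indeed, since $\partial K$ is $C^1$, the point $y$ must lie on the inner normal line to $\partial K$ at $\xi(y)$, at distance exactly $r$; hence $\xi(y)$ determines $y$.

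\emph{Step 3 (open image).} The sets $U \cap \partial K_{-r}$ and $\partial K$ are topological $(n-1)$-manifolds, the latter being a sphere. Invariance of domain therefore shows that $V := \xi(U \cap \partial K_{-r})$ is a nonempty open subset of $\partial K$.

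\emph{Step 4 (contradiction).} Each $x \in V$ admits an interior touching ball $B(y,r) \subseteq K$ with $x \in \partial B(y,r)$. This forces every upper normal curvature of $\partial K$ at $x$ to be at most $1/r$. But $V$ is open and nonempty, so by the choice of $\mathcal R_2$ it contains points where the upper curvature is $+\infty$, which is the desired contradiction.

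The main obstacle is Step 2. One must justify carefully that $C^1$-regularity of the level set rules out multiple footpoints, for instance via the one-sided directional derivatives of $\delta_{\partial K}$ or via the supporting-cone description of the convex set $K_{-r}$. One must also justify that $\xi$ is continuous. I would first try to derive both facts from the semiconcavity of $\delta_{\partial K}$ in the interior of $K$. The remaining steps are standard once Zamfirescu's theorem is quoted.
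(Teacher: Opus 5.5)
Your proof is correct in substance. Its skeleton is the same as the converse half of Theorem \ref{main th} combined with Corollary \ref{cor 2}: a footpoint map on the regular piece of the level set, injectivity, invariance of domain, and a contradiction with a generic property of $\partial K$. The differences lie in the two inputs.

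For genericity, the paper quotes Schneider's theorem, which states exactly what is needed: a typical $C^1$ convex body fails the inner uniform ball condition on every nonempty open subset of $\partial K$. You instead use Zamfirescu's theorem together with an elementary comparison: an inner touching ball of radius $r$ at $x$ forces all upper normal curvatures at $x$ to be at most $1/r$. This is a valid substitute. It even gives more than is needed, since it excludes inner touching balls of any radius at a residual set of points.

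For footpoints, the paper never proves global uniqueness. It shows $\bm{\xi}_{\partial\Omega}(x)\subseteq\{x\pm r\nu(x)\}$. It then either selects $x+r\nu(x)$ on the whole piece, or uses weak continuity of $\bm{\xi}_{\partial\Omega}$ to show the footpoint stays unique near a point where it is unique. That argument uses no convexity. This is why the paper obtains an equivalence valid for every open set with $C^1$ boundary, and gets Corollary \ref{cor 1} from the same theorem. Your concavity argument is shorter but is tied to convex $K$.

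One spot in Step 2 needs repair. The implication ``two distinct supporting normals contradicts $C^1$-regularity'' holds only when $K_{-r}$ has interior points, i.e.\ for $r$ smaller than the inradius. In that range, two footpoints $a\neq b$ give outer normals $u_a\neq u_b$ of $K_{-r}$ at $y$, and $u_b=-u_a$ would place $K_{-r}$ inside a hyperplane.

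When $r$ equals the inradius, $K_{-r}$ can be a flat $(n-1)$-dimensional convex set. For a stadium in $\mathbf{R}^2$, $K_{-1}$ is a segment. Its relative interior is a $C^1$-hypersurface, yet each point $y$ there has the two footpoints $y\pm r\nu$.

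You can close this in either of two ways:
\begin{enumerate}
\item Observe that $\partial K$ would then contain the flat pieces $(U\cap K_{-r})+r\nu$ and $(U\cap K_{-r})-r\nu$. This contradicts the strict convexity you already built into $\mathcal R_1$.
\item Do as the paper does and set $\xi(y)=y+r\nu$ in that case. This map is still continuous and injective, so Steps 3 and 4 go through unchanged.
\end{enumerate}
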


Both these results are corollaries of the main theorem of this note, Theorem \ref{main th}, where we prove for an arbitrary open set $ \Omega \subseteq \mathbf{R}^n $ with $ C^1 $ boundary that the existence of interior parallel sets that are somewhere $ C^1 $-regular is equivalent to the validity of a touching uniform ball condition on some open subset of $ \partial \Omega $. Theorem \ref{main th} generalizes \cite[Theorem 2.8]{Santilli21}. 

Recently, in \cite{lamb2025varepsilonneighbourhoodsplanenowheresmoothboundary} it is proved that there exists  $ E \subseteq \mathbf{R}^2 $ and $ \epsilon > 0 $ such that $ \delta_E^{-1}(\epsilon) $ is nowhere $ C^1 $-regular.

\section{Main results}

\begin{Definition}\label{submanifolds}
We say that a subset $ M \subseteq \mathbf{R}^n $ is a $ C^{1}$-hypersurface
 if and only if for every $ a \in M $ there exists an open subset $ U $ of $ \mathbf{R}^n $, an $ n-1 $ dimensional subspace $ Z $ of $ \mathbf{R}^n $ and a $ C^{1} $-diffeomorphism $ \sigma : U \rightarrow \mathbf{R}^n $ such that 
	\begin{equation*}
		\sigma(U \cap M) = Z \cap \sigma(U).
	\end{equation*}
\end{Definition}

\begin{Definition}
	Let $ K \subseteq \mathbf{R}^n $ be a closed set. 
	
	We define the \emph{distance function from $ K $} by 
	$$ \delta_K(x) = \inf\{|x-a| : a \in K\} \quad \text{for $ x \in \mathbf{R}^n $}\,, $$
	the \emph{nearest point-projection onto $ K $} as the multivalued function $ \bm{\xi}_K: \mathbf{R}^n \rightarrow \bm{2}^{K} $ given by
	\begin{equation*}
		\bm{\xi}_K(x) = K  \cap \big\{a: | x - a| = \bm{\delta}_K(x) \big\} \quad \text{for $ x \in \mathbf{R}^n $\,,} 
	\end{equation*}
and we set
$$ \Unp(K) = \{x \in \mathbf{R}^n \setminus K : \text{$\bm{\xi}_K(x)$ is a singleton}\}\,. $$
\end{Definition}

We introduce now a notion of continuity for multivalued maps.

\begin{Definition}[\protect{cf.~\cite[Definition~2]{Zajicek1983diff}}]
	\label{def:multi-cont}
	Let $X$ and $Y$ be normed vectorspaces and $T$ be a~$Y$-mul\-ti\-val\-ued
	map defined on $X$. We say that \emph{$T$ is weakly continuous
		at $a \in X$} if and only if $T(a) \neq \varnothing $ and for each $\varepsilon > 0$
	there exists $\delta > 0$ such that
	\begin{displaymath}
		T(x) \subseteq T(a) + \bm{U}(0,\epsilon) \quad \text{whenever $x \in X $ and
			 and $|x-a| < \delta$} \,.
	\end{displaymath} 
\end{Definition}

We collect some well known facts for $ \delta_K $ and  $ \bm{\xi}_K $.

\begin{Lemma}\label{lem basic}
	If $ K \subseteq \mathbf{R}^n $ is closed then the following statement holds.
	\begin{enumerate}
		\item If $ x \in \mathbf{R}^n \setminus K $, then $ \delta_K $ is differentiable at $ x $ if and only if $ x \in \Unp(K) $, in which case $ | \nabla \delta_K(x) | = 1 $. In particular, $$ \mathscr{L}^n(\mathbf{R}^n \setminus (K \cup \Unp(K))) =0\,. $$
		\item  $ \bm{\xi}_K $ is weakly continuous at every point of $ \mathbf{R}^n $.
		\item $ \delta_K $ is continuously differentiable on the interior of $ \Unp(K) \setminus K $.
	\end{enumerate} 
	\end{Lemma}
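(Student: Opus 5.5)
The plan is to prove the three items in order, each from the elementary geometry of nearest points together with compactness of $\bm{\xi}_K(x)$, which is nonempty and compact for every $x$ because $K$ is closed.

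\emph{Item (1).} Fix $x \notin K$ and set $r = \delta_K(x) > 0$.

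If $\bm{\xi}_K(x) = \{a\}$, we show that $\delta_K$ is differentiable at $x$ with gradient $(x-a)/r$. The upper bound $\delta_K(y) \le |y-a|$ holds trivially, and $|y-a| = r + \langle (x-a)/r, y-x\rangle + O(|y-x|^2)$. For the lower bound, pick $b_y \in \bm{\xi}_K(y)$. Then
$$ \delta_K(y) = |y-b_y| \ge |x-b_y| + \langle (x-b_y)/|x-b_y|, y-x\rangle - O(|y-x|^2/r) \ge r + \langle (x-b_y)/|x-b_y|, y-x\rangle - O(|y-x|^2). $$
We then need $b_y \to a$. This follows from $|x-b_y| \le r + 2|y-x|$ together with compactness and the uniqueness of the nearest point: any limit point of $b_y$ lies in $K$ at distance $r$ from $x$, hence equals $a$. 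Consequently $(x-b_y)/|x-b_y| \to (x-a)/r$, which gives differentiability.

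Conversely, suppose $a \neq b$ both lie in $\bm{\xi}_K(x)$. The one-sided directional derivative in direction $v$ is at most $\min\{\langle v, (x-a)/r\rangle, \langle v,(x-b)/r\rangle\}$, since $\delta_K \le |\cdot - a|$ and $\delta_K \le |\cdot - b|$. If $\delta_K$ were differentiable, the linear map $v \mapsto \nabla\delta_K(x)\cdot v$ would have to lie below two distinct linear functions in every direction $v$ and $-v$. This forces it to equal both of them, which is a contradiction.

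The equality $|\nabla\delta_K|=1$ is read off from the formula for the gradient. The Lebesgue-null statement follows from Rademacher's theorem, because $\delta_K$ is $1$-Lipschitz.

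\emph{Item (2).} Argue by contradiction. Suppose there are $x_i \to a$ and $b_i \in \bm{\xi}_K(x_i)$ with $\mathrm{dist}(b_i, \bm{\xi}_K(a)) \ge \epsilon$. The $b_i$ are bounded, since $|b_i - x_i| = \delta_K(x_i)$ is bounded. Pass to a limit $b \in K$. Continuity of $\delta_K$ gives $|b-a| = \delta_K(a)$, so $b \in \bm{\xi}_K(a)$, which contradicts the $\epsilon$-separation.

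\emph{Item (3).} On the interior $V$ of $\Unp(K)\setminus K$, item (1) gives differentiability with
$$ \nabla\delta_K(x) = \frac{x - \xi_K(x)}{\delta_K(x)}, $$
where $\xi_K$ now denotes the single-valued projection. On $V$, weak continuity from item (2) means exactly that this single-valued map is continuous. Hence the gradient is continuous, because $\delta_K$ is continuous and positive on $V$.

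The only genuinely delicate point is the lower bound in item (1), where control of $b_y$ is needed. That control is supplied by the same compactness argument as in item (2), so I would prove (2) first and invoke it within (1).
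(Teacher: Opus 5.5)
Your proof is correct, and it differs from the paper only in that the paper gives no argument of its own. The paper simply refers to \cite[Theorem 4.8]{Federer1959} and \cite[Lemma 2.41]{Kolasinski2023b}.

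What you have written is a self-contained replacement for those citations. It uses only three ingredients:
\begin{enumerate}
\item compactness of $\bm{\xi}_K(x)$, for the weak continuity in (2);
\item the first-order expansion of $|\cdot - a|$ at points $x \neq a$, together with the ``two distinct linear minorants'' argument, for the differentiability criterion in (1);
\item Rademacher's theorem, for the null-set statement.
\end{enumerate}
The ordering you propose is exactly the logical dependence among the three items: first (2), then (2) gives $b_y \to a$ in (1), then (1) and (2) together give (3). The citation buys brevity and points to sources that contain much more, such as Federer's theory of sets of positive reach. Your version buys independence from the literature at the cost of about half a page.

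Two minor remarks.
\begin{itemize}
\item The lower bound in (1) needs no quadratic error term. Convexity of the Euclidean norm gives $|z+h| \ge |z| + \langle z/|z|, h\rangle$ exactly for $z \neq 0$.
\item The requirement $\bm{\xi}_K(a) \neq \varnothing$ in the definition of weak continuity, and the Lipschitz property of $\delta_K$, both need $K \neq \varnothing$. You assume this tacitly, as does the paper.
\end{itemize}
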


\begin{proof}
	See \cite[Theorem 4.8]{Federer1959}  and \cite[Lemma 2.41]{Kolasinski2023b}.
\end{proof}

\begin{Definition}\label{inner ball}
	Suppose $ \Omega $ is an open subset of $ \mathbf{R}^n $ and $ S \subseteq \partial \Omega $. We say that $ \Omega $ satisfies an \emph{inner uniform ball condition on $ S $} if and only if there exists $ \rho > 0 $ such that each point of $ S $ belongs to the boundary of an open ball  of radius $ \rho $ contained in $ \Omega $.
\end{Definition}

\begin{Theorem}\label{main th}
	Suppose $ \Omega \subseteq \mathbf{R}^n $ is an open subset with $ C^1 $-boundary and define $$ S_r = \Omega \cap \{x : \delta_{\partial \Omega}(x) = r\} \quad \text{ for $ r > 0 $.}$$ 
	
	Then $ \Omega $ satisfies an inner uniform ball condition on a non-empty open subset of $ \partial \Omega $ if and only if there exist $ r\in \mathbf{R}^+ $ and $ U \subseteq \Omega $ open, such that $ S_r \cap U $ is a $ C^1 $-hypersurface.
\end{Theorem}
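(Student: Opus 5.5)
Write $K = \partial\Omega$, which is closed, and $\delta = \delta_K$. We prove the two implications separately.

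\emph{The ball condition implies regularity.} Suppose $W \subseteq \partial\Omega$ is non-empty and relatively open, and $\rho>0$ is such that each $a\in W$ lies on the boundary of an open ball $\bm{U}(c_a,\rho)\subseteq\Omega$. Since $\partial\Omega$ is $C^1$, the ball is tangent at $a$, so $c_a = a+\rho\,\nu(a)$ with $\nu$ the continuous inner unit normal on $W$. Fix $0<r<\rho$, $a_0\in W$, and set $x_0=a_0+r\nu(a_0)$. Then $\delta(x_0)=r$ and $\bm{\xi}_K(x_0)=\{a_0\}$: any other nearest point would be at distance $r$ from $x_0$ while $\bm{U}(x_0,r)$ lies in $\bm{U}(c_{a_0},\rho)$, which meets $K$ only at $a_0$. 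By weak continuity of $\bm{\xi}_K$ (Lemma \ref{lem basic}(2)), points $x$ near $x_0$ with $\delta(x)$ near $r$ have all their nearest points in $W$ near $a_0$.

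For such an $x$ with nearest point $a$, we have $x=a+\delta(x)\nu(a)$. Since $\delta(x)<\rho$, the same ball argument gives $\bm{\xi}_K(x)=\{a\}$. Hence a neighbourhood $V$ of $x_0$ is contained in $\Unp(K)\setminus K$. By Lemma \ref{lem basic}(3) and (1), $\delta$ is $C^1$ on $V$ with $|\nabla\delta|=1$. The implicit function theorem then shows $S_r\cap V=\delta^{-1}(r)\cap V$ is a $C^1$-hypersurface.

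\emph{Regularity implies the ball condition.} Suppose $S_r\cap U$ is a $C^1$-hypersurface, where $U\subseteq\Omega$ is open. Fix $x_0\in S_r\cap U$ and $a_0\in\bm{\xi}_K(x_0)$. The ball $\bm{U}(x_0,r)$ lies in $\Omega$ and contains no point of $S_r$. For each $a\in\bm{\xi}_K(x_0)$, the closed ball $\overline{\bm{U}}(a,r)$ meets $S_r$ only... rather, the key fact is that $\bm{U}(a,r)\cap\{\delta>r\}\cap\Omega=\varnothing$ near $x_0$. So $S_r$ is touched at $x_0$ from the side $\{\delta<r\}$ by the ball $\bm{U}(a,r)$, and from the side $\{\delta\ge r\}$ by nothing yet.

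$C^1$ regularity forces the unit normal of $S_r$ at $x_0$ to equal $(x_0-a)/r$ for every $a\in\bm{\xi}_K(x_0)$. Indeed, $\delta\le r$ on $\bm{U}(a,r)$ and $\delta(x_0)=r$, so $S_r$ lies outside $\bm{U}(a,r)$ near $x_0$, and its tangent plane must be the tangent plane of that sphere. Consequently $\bm{\xi}_K(x_0)$ is a single point. Thus $S_r\cap U\subseteq\Unp(K)$, the map $x\mapsto\bm{\xi}_K(x)=x-r\,\bm{n}(x)$ is continuous on $S_r\cap U$, where $\bm{n}$ is the continuous normal, and $\nu(\bm{\xi}_K(x))=\bm{n}(x)$.

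Next, consider $\pi=\bm{\xi}_K|_{S_r\cap U}$ near $x_0$. It is continuous and injective: if $\pi(x)=\pi(y)=a$, then $x,y\in a+r\{\pm\nu(a)\}$, and only the inner direction lands in $\Omega$, so $x=y$. Since $S_r\cap U$ and $\partial\Omega$ are both topological $(n-1)$-manifolds, invariance of domain shows that $\pi$ maps an open piece $S_r\cap U'$ onto a relatively open set $W\subseteq\partial\Omega$. For each $a=\pi(x)\in W$, the ball $\bm{U}(x,r)$ lies in $\Omega$ and has $a$ on its boundary. This is the inner uniform ball condition on $W$ with $\rho=r$.

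The main obstacle is the step showing that $C^1$ regularity of $S_r$ forces uniqueness of nearest points. This requires showing that $S_r$ near $x_0$ genuinely avoids each ball $\bm{U}(a,r)$ and that tangency to two distinct such spheres is impossible. It also requires checking that $\pi$ is open via invariance of domain. For the latter, one can alternatively use the homeomorphism $a\mapsto a+r\nu(a)$ defined on $\pi(S_r\cap U')$, which is the inverse of $\pi$.
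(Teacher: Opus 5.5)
Your forward direction is fine; it proves directly the special case of the cited ball-condition result that the paper invokes. The converse, however, has a genuine gap at exactly the step you single out as the main obstacle. The claim that $C^1$-regularity of $S_r$ forces $\bm{\xi}_K(x_0)$ to be a singleton is false, and ``tangency to two distinct such spheres'' is not impossible. Your tangency argument only shows that the tangent plane of $S_r$ at $x_0$ is $(x_0-a)^\perp$, i.e.\ $(x_0-a)/r = \pm\bm{n}(x_0)$. Hence $\bm{\xi}_K(x_0) \subseteq \{x_0 - r\bm{n}(x_0),\, x_0 + r\bm{n}(x_0)\}$. The two balls of radius $r$ centred at these points lie on opposite sides of the same tangent hyperplane, and $S_r$ can avoid both. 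For the slab $\Omega = \{x : 0 < x_n < 2r\}$, the set $S_r = \{x : x_n = r\}$ is a hyperplane, yet each of its points has the two nearest points $x \pm r e_n$, so $S_r \cap \Unp(\partial\Omega) = \varnothing$. Thus $\pi = \bm{\xi}_K|_{S_r \cap U}$ need not be a single-valued map, and your continuity, injectivity and invariance-of-domain steps have nothing to act on.

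What is missing is a case distinction that produces a continuous single-valued selection of $\bm{\xi}_K$ on a relatively open piece of $S_r \cap U$.
\begin{itemize}
\item If some $x_0 \in S_r \cap U$ lies in $\Unp(\partial\Omega)$, replace $\bm{n}$ by $-\bm{n}$ if necessary so that $\bm{\xi}_K(x_0) = \{x_0 + r\bm{n}(x_0)\}$. Weak continuity of $\bm{\xi}_K$ at $x_0$ and continuity of $\bm{n}$ show that, for $x \in S_r$ near $x_0$, all nearest points of $x$ lie within $r/2$ of $x_0 + r\bm{n}(x_0)$. Meanwhile $x - r\bm{n}(x)$ lies within $r/2$ of $x_0 - r\bm{n}(x_0)$, which is at distance $2r$ from $x_0 + r\bm{n}(x_0)$. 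Hence $\bm{\xi}_K(x) = \{x + r\bm{n}(x)\}$ near $x_0$.
\item If no point of $S_r \cap U$ lies in $\Unp(\partial\Omega)$, then $\bm{\xi}_K(x) = \{x - r\bm{n}(x),\, x + r\bm{n}(x)\}$ for all such $x$, and $x \mapsto x + r\bm{n}(x)$ is itself a continuous selection.
\end{itemize}
In both cases, calling the selection $\xi$, the ball $\bm{U}(x,r) \subseteq \Omega$ has $\xi(x)$ on its boundary, and your invariance-of-domain argument then goes through.

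Two smaller points. For injectivity, ``only the inner direction lands in $\Omega$'' needs more than the location of the point, since $a - r\nu(a)$ itself can lie in $\Omega$; you must use that $a$ is its nearest point. It is cleaner, as in the paper, to restrict to $S_r \cap \bm{U}(z,s)$ with $s < r$ and rule out $x - \xi(x) = \xi(x) - y$ via $|x-y| < 2r$. Also, your alternative via $a \mapsto a + r\nu(a)$ is circular: showing that this map sends a neighbourhood of $a_0$ into $S_r$ is precisely the ball condition you are trying to prove.
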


\begin{proof}
	If $ \Omega $ satisfies an inner uniform ball condition on an open subset of $ \partial \Omega $, then by \cite[Theorem 2.8]{Santilli21} there exists an open set $ U \subseteq \Omega \cap \Unp(\partial \Omega) $. It follows from Lemma \ref{lem basic} that $ \delta_{\partial \Omega}| U $ is continuously differentiable with $ |\nabla \delta_{\partial \Omega}| = 1 $ on $ U $, hence $ S_r \cap U  $ is a $ C^1 $-hypersurface whenever $ S_r \cap U $ is not empty.
	
	We prove now the converse. Suppose $ r \in \mathbf{R}^+  $, $ U \subseteq \Omega $ is  open, $ U \cap S_r $ is a $ C^1 $-hypersurface and $$ \text{$ \nu : U \cap S_r \rightarrow \mathbf{S}^{n-1} $ is a continuous unit-normal.} $$
Firstly, we claim that
\begin{equation}\label{claim 1}
	\bm{\xi}_{\partial \Omega}(x) \subseteq \{x + r \nu(x), x-r\nu(x)\} \quad \text{for $ x \in U \cap S_r $}\,.
\end{equation}
Suppose $ x \in U \cap S_r $. If $ a \in \bm{\xi}_{\partial \Omega}(x) $ and $ \eta = \frac{x-a}{r} $ then we observe that $$ \bm{U}\Big(x-\frac{r}{2}\eta, \frac{r}{2}\Big) \cap S_r = \varnothing \quad \text{and} \quad  \eta \in \Nor(S_r, x). $$ Hence, either $ \eta = \nu(x) $ of $ \eta = - \nu(x) $; in other words, either $ a = x + r \nu(x) $ or $ a = x - r \nu(x) $ and \eqref{claim 1} is proved. 

If $ U \cap S_r \cap \Unp(\partial \Omega) = \varnothing $, then 
$$ \bm{\xi}_{\partial \Omega}(x) = \{x + r \nu(x), x-r\nu(x)\} \quad \text{for $ x \in U \cap S_r $} $$
and we can define $ \xi(x) = x + r \nu(x) $ for  $ x \in U \cap S_r $ to obtain a continuous map $ \xi : U \cap S_r \rightarrow \partial \Omega $. 

On the other hand, if there exists $ x_0 \in U \cap S_r \cap \Unp(\partial \Omega) $ then we assume, without loss of generality, that $ \bm{\xi}_{\partial \Omega}(x_0) = \{x_0 + r \nu(x_0)\}  $. Noting that $ \bm{\xi}_{\partial \Omega} $ is weakly continuous at $ x_0 $ and $ \nu $ is continuous at $ x_0 $, we can choose $ 0 < \delta < \infty $  so that $ \bm{U}(x_0, \delta) \subseteq U $ and
\begin{equation*}
	 \bm{\xi}_{\partial \Omega}(x) \subseteq (x_0 + r \nu(x_0)) + \bm{U}(0, r/2) 
\end{equation*}
and 
\begin{equation*}
| (x- r \nu(x)) - (x_0 - r \nu(x_0))| < \tfrac{r}{2}
\end{equation*}
for every $ x \in \bm{U}(x_0, \delta) \cap S_r $. Hence, if there existed $ x \in \bm{U}(x_0, \delta) \cap S_r $ so that $ x - r \nu(x) \in \bm{\xi}_{\partial \Omega}(x) $, then we would infer that
$$ | (x- r \nu(x)) - (x_0 + r \nu(x_0)) | < \tfrac{r}{2} $$ 
and 
$$ 2r = | (x_0 + r \nu(x_0)) - (x_0- r \nu(x_0)) | < r\,, $$
leading to a contradiction. We conclude that
\begin{equation*}\label{claim 3}
 \bm{\xi}_{\partial \Omega}(x) = \{x + r \nu(x)\} \quad \text{for $ x \in \bm{U}(x_0, \delta) \cap S_r $\,,}
\end{equation*}
hence the map $ \xi(x) = x + r \nu(x) $ for $ x \in \bm{U}(x_0, \delta) \cap S_r $ defines a continuous map $ \xi : \bm{U}(x_0, \delta) \cap S_r \rightarrow \partial \Omega $.

In both cases considered above, we see that if $ x \in \dmn \xi $ then $ | x - \xi(x)| = r $ and 
$$ \bm{U}\bigl(\xi(x) + r \tfrac{x-\xi(x)}{r}, r\bigr) \cap  \partial \Omega = \varnothing\,. $$
Hence, $ x- \xi(x) \in \Nor(\partial \Omega, \xi(x)) $ and $ \Omega $ satisfies an inner uniform ball condition on $ \xi(\dmn \xi) $. To conclude the proof it remains to show that $ \xi(\dmn\xi) $ has non empty interior in $ \partial \Omega $. To this end, we choose $ z \in \dmn \xi $ and $ 0 < s < r $ so that $ \bm{U}(z, s) \cap S_r   \subseteq \dmn \xi $. If $  x, y \in  \dmn \xi $ satisfy $\xi(x) = \xi(y) $, we recall that  $ \dim \Nor(\partial \Omega,\xi(x) ) = 1 $, $ | x- \xi(x) | = r = | y - \xi(x) | $ and
\begin{equation*}
y- \xi(x)\,,	x- \xi(x) \in \Nor(\partial \Omega, \xi(x)) \,,
\end{equation*}
to see that 
$$ \text{either $x- \xi(x) =  y- \xi(x) $, or $x- \xi(x) =  \xi(x) - y $.}$$ However, the latter would imply that 
\begin{equation*}
	| x-y | = |x- \xi(x) + \xi(x) - y| = 2 |x-\xi(x)| = 2r
\end{equation*}
which is clearly impossible, since $|x-y| < 2s < 2r $. Henceforth $ x= y $, and we conclude that $ \xi| S_r \cap \bm{U}(z, s) $  is injective.  Since $ \bm{U}(z, s) \cap S_r $ and $ \partial \Omega $ are $ n $-dimensional manifolds, we can now apply the invariance of domain theorem (cf.\ \cite[IV.\ 7.4]{Dold}) to conclude that $ \xi( \bm{U}(z, s) \cap S_r) $ is an open subset of $ \partial \Omega $. 
\end{proof}

We can now easily prove the two main results mentioned in the introduction.

\begin{Corollary}\label{cor 1}
	There exist $ C^{1} $ hypersurfaces $ \Sigma \subseteq \mathbf{R}^n $, given as graphs of  $ C^{1} $ functions $ f : \mathbf{R}^{n-1} \rightarrow \mathbf{R} $, such that $ U \cap \delta_{\Sigma}^{-1}(r) $ fails to be a $ C^1 $-hypersurface whenever $ U \subseteq \mathbf{R}^n $ is open and $ r > 0 $.
\end{Corollary}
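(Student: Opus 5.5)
We will construct $f$ so that neither side of the graph satisfies a uniform ball condition on any open piece, and then apply Theorem \ref{main th} twice, once to each of $\Omega_+ = \{(y,t) : t > f(y)\}$ and $\Omega_- = \{(y,t) : t < f(y)\}$. Both are open sets with $C^1$-boundary $\Sigma$, the graph of $f$. Moreover $\mathbf{R}^n \setminus \Sigma = \Omega_+ \cup \Omega_-$, so for $r>0$ we have $\delta_\Sigma^{-1}(r) = S^+_r \cup S^-_r$, where $S^\pm_r = \Omega_\pm \cap \{\delta_{\partial\Omega_\pm} = r\}$.

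Suppose $U$ is open, $r>0$, and $U \cap \delta_\Sigma^{-1}(r)$ is a $C^1$-hypersurface. This set is nonempty, since the empty set is trivially a hypersurface and we shall read the statement as concerning nonempty intersections, as the intended meaning of "nowhere $C^1$-regular" requires. Because $\Omega_+$ and $\Omega_-$ are disjoint open sets, there is a sign for which $U' = U \cap \Omega_\pm$ meets $\delta_\Sigma^{-1}(r)$. Then $U' \cap S^\pm_r = U' \cap \delta_\Sigma^{-1}(r)$ is a nonempty open subset of a $C^1$-hypersurface, hence itself a $C^1$-hypersurface. Theorem \ref{main th} then gives an inner uniform ball condition for $\Omega_\pm$ on a nonempty open subset of $\Sigma$. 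So it suffices to produce $f \in C^1(\mathbf{R}^{n-1})$ such that, on every open subset of $\Sigma$, neither $\Omega_+$ nor $\Omega_-$ satisfies an inner uniform ball condition.

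Any ball of radius $\rho$ inside $\Omega_+$ touching $\Sigma$ at $(y_0,f(y_0))$ is tangent there, because $\Sigma$ is $C^1$. This forces a quadratic upper bound $f(y) \le f(y_0) + \nabla f(y_0)\cdot(y-y_0) + C_\rho|y-y_0|^2$ for $|y-y_0|$ small, with $C_\rho$ depending only on $\rho$ and a bound for $|\nabla f|$. The analogous lower bound holds for $\Omega_-$. It is therefore enough to build $f$ such that on every open set $V \subseteq \mathbf{R}^{n-1}$ the upper bound fails with a uniform constant at all points of $V$, and likewise the lower bound. We take
\[ f(y) = \sum_{k} 2^{-k}\, g(y - q_k), \]
where $(q_k)$ enumerates $\mathbf{Q}^{n-1}$ and $g(y) = |y|^{3/2}$ is cut off to a bounded $C^1$ function with compact support. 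The series converges in $C^1$, so $f$ is $C^1$ and $\Sigma$ is a $C^1$ graph. Near each rational point $q_k$, the term $2^{-k}g(\cdot-q_k)$ makes $f - (\text{tangent plane at } q_k)$ grow like $c\,|y-q_k|^{3/2}$. The remaining terms form a $C^1$ function, and it must not cancel this growth in every direction. Since $|y|^{3/2}/|y|^2 \to \infty$, the upper quadratic bound fails at $q_k$. The difficulty is that the failure is only at countably many points, whereas a uniform ball condition on an open set $W$ requires the bound at every point of $W$ with one fixed $C_\rho$. Each $W$ contains points $q_k$, so the bound does fail somewhere in $W$, which is all we need. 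To get the failure on the $\Omega_-$ side as well, we alternate signs: we use $+g$ on one subsequence of rationals dense in every open set and $-g$ on another.

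The main obstacle is making precise that the tail $\sum_{j\ne k} \pm 2^{-j} g(\cdot-q_j)$ cannot destroy the $|y-q_k|^{3/2}$ singularity at $q_k$. A $C^1$ perturbation might carry its own $|y-q_k|^{3/2}$-type contributions, since other $q_j$ accumulate at $q_k$. To control this, we would instead use a more carefully designed construction. We choose the supports inductively, with radii $r_k$ and amplitudes $\epsilon_k$ decreasing so fast that every later term has $C^{1,1}$-norm near $q_k$ much smaller than the local excess $\epsilon_k |y-q_k|^{3/2}$ at a chosen scale. For example, we choose the later centers outside a small ball around $q_k$ at the relevant scale, while the centers still remain dense. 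This gives, for each $k$, a sequence $y_m \to q_k$ along which $\pm\bigl(f - \text{tangent}\bigr)(y_m)/|y_m-q_k|^2 \to +\infty$. That contradicts the inner ball condition near $q_k$ for the corresponding side, and completes the argument.
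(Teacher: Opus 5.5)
Your reduction matches the paper's. $\delta_\Sigma^{-1}(r)$ is the union of the interior parallel sets of $\Omega_\pm$. Applying Theorem \ref{main th} to a side that meets $U$ turns local $C^1$-regularity into an inner uniform ball condition on a nonempty open piece of $\Sigma$; your remark that the intersection must be nonempty is the right reading of the statement. You differ in the example: the paper does not build $f$ but cites Kohn's complete $C^1$-graph, for which both components of $\mathbf{R}^n\setminus\Sigma$ fail the inner uniform ball condition on every open subset of $\Sigma$.

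Your own construction is where the gap is. The decisive estimate is that the tail $\sum_{j\ne k}\pm 2^{-j}g(\cdot-q_j)$ cannot cancel $2^{-k}|y-q_k|^{3/2}$. You name this as the main obstacle but never prove it. The inductive replacement is not an actual construction, and as phrased it is inconsistent. Later centres cannot avoid a fixed ball around $q_k$ and still be dense, and any later bump centred in that ball has infinite $C^{1,1}$-norm there. Making it work would need shrinking protected neighbourhoods and diagonal bookkeeping that you do not supply. So the concluding sequence $y_m\to q_k$ is asserted, not derived.

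The gap closes easily for your original series, with no inductive choice.
\begin{itemize}
\item Take the cutoff smooth and equal to $1$ near $0$.
\item Put $+g$ on a countable dense set and $-g$ on a disjoint countable dense set, e.g.\ $\mathbf{Q}^{n-1}$ and $\mathbf{Q}^{n-1}+(\sqrt2,0,\dots,0)$. Enumerate them together as distinct points $q_k$ with signs $\sigma_k$.
\item Then $g$ is smooth off $0$, and $\nabla g$ is globally $\tfrac12$-H\"older with some constant $L$.
\end{itemize}
Write $R_a h(y)=h(y)-h(a)-\nabla h(a)\cdot(y-a)$. For every $j$ you get $|R_{q_k}[g(\cdot-q_j)](y)|\le L|y-q_k|^{3/2}$. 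For $j\ne k$ this remainder is moreover $O(|y-q_k|^2)$ as $y\to q_k$. Since the series converges in $C^1$, $R_{q_k}f$ is the sum of the termwise remainders. Dominated convergence in $j$, with weights $2^{-j}$ and bound $L$, gives
\[
R_{q_k}f(y)=\sigma_k2^{-k}|y-q_k|^{3/2}+o\bigl(|y-q_k|^{3/2}\bigr)\qquad (y\to q_k).
\]
So at the $+$ centres no ball of any radius in $\Omega_+$ touches $\Sigma$, and at the $-$ centres none in $\Omega_-$ does. By density, neither side satisfies an inner uniform ball condition on any open subset of $\Sigma$. With this estimate your argument becomes a complete, self-contained alternative to citing Kohn.
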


\begin{proof}
An example of a complete $ C^{1} $-graph $ \Sigma $ is given in \cite{Kohn1977} such that the two connected components of $ \mathbf{R}^n \setminus \Sigma $ fail to satisfy an inner uniform ball condition on each open subset of $ \Sigma $ (cf.\ proof of \cite[Corollary 2.9]{Santilli21}). Hence the conclusion follows from Theorem \ref{main th}.
\end{proof}

\begin{Corollary}\label{cor 2}
	For a generic $ C^1 $-regular convex body $ K \subseteq \mathbf{R}^n $ the set $  U \cap \delta_{\mathbf{R}^n \setminus K}^{-1}(r) $ fails to be a $ C^1 $-hypersurface whenever $ U \subseteq \mathbf{R}^n $ is open and  $ r > 0 $.
\end{Corollary}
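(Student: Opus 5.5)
The plan is to reduce Corollary \ref{cor 2} to Theorem \ref{main th} applied to $\Omega = \operatorname{int} K$, which has $C^1$-boundary $\partial K$ whenever $K$ is a $C^1$-regular convex body. Since $K$ is convex and closed, $\delta_{\mathbf{R}^n\setminus K}$ coincides with $\delta_{\partial K}$ on $K$ and vanishes outside $K$. Hence for $r>0$ we have $\delta_{\mathbf{R}^n\setminus K}^{-1}(r) = \Omega \cap \{x : \delta_{\partial\Omega}(x) = r\} = S_r$. If for some open $U$ and some $r>0$ the set $U\cap \delta^{-1}_{\mathbf{R}^n\setminus K}(r)$ were a $C^1$-hypersurface, then $U\cap S_r = (U\cap\Omega)\cap S_r$ would be one, with $U\cap \Omega$ open. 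Theorem \ref{main th} would then give an inner uniform ball condition on some non-empty open subset of $\partial K$. So it suffices to show that, for a generic $C^1$-regular convex body, no non-empty relatively open subset of $\partial K$ admits an inner uniform ball condition.

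For this step I would invoke the classical genericity results on curvature of convex bodies (Zamfirescu; cf.\ \cite[2.7]{Schneiderbook}). For a generic convex body $K$ the set of boundary points at which some lower curvature vanishes is dense (indeed residual) in $\partial K$. More precisely, $\liminf$ of the normal curvature is $0$ in some direction at such points. I will also need that generic convex bodies are smooth and strictly convex, so that genericity within the class of all convex bodies restricts to genericity within the $C^1$-regular ones. The $C^1$ bodies form a residual, hence Baire, subset of the space of convex bodies with the Hausdorff metric, and a residual set intersected with a residual subspace is residual there.

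Now suppose $\Omega$ satisfies the inner ball condition with radius $\rho$ on an open $V\subseteq\partial K$. At each $p\in V$ a ball of radius $\rho$ lies in $K$ and touches $\partial K$ at $p$. This forces every lower curvature at $p$ to be at least $1/\rho$: locally $\partial K$ lies outside the ball, so the boundary bends at least as much as the sphere. But by density there is a point of $V$ with a vanishing lower curvature, which is a contradiction.

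The main obstacle is choosing the right form of Zamfirescu's theorem. It must deliver density of zero-lower-curvature points, rather than only "curvature zero almost everywhere", and I need to confirm it transfers to the subspace of $C^1$ bodies. I would first try the statement in \cite{Schneiderbook} that for most convex bodies the lower curvature is zero in all directions at a dense set of points. If only an "almost everywhere" version is available, I would instead argue directly via Baire category. For each $m,k$, let $F_{m,k}$ be the set of bodies admitting a $1/m$-ball condition on some boundary cap of size $1/k$; this set is closed. Its complement is dense, since small polytopal perturbations followed by smoothing destroy the ball condition everywhere. Genericity is then the complement of $\bigcup F_{m,k}$.
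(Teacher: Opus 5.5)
Your reduction is the same as the paper's. With $\Omega=\operatorname{int}K$ one has $\delta_{\mathbf{R}^n\setminus K}^{-1}(r)=S_r$, and you may replace $U$ by $U\cap\Omega$. Theorem \ref{main th} then turns a $C^1$ piece of some $S_r$ into an inner uniform ball condition on a non-empty open subset of $\partial K$. Your transfer of genericity to the residual subspace of $C^1$ bodies is also correct. The paper simply quotes \cite{Schneider79} for the fact that a generic $C^1$ convex body has no such open subset.

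Your derivation of that fact fails, because the curvature comparison is reversed. Suppose $\bm{U}(c,\rho)\subseteq K$ and $p\in\partial\bm{U}(c,\rho)\cap\partial K$. Near $p$, write $\partial K$ as the graph of a convex $f\ge 0$ over the tangent hyperplane, with $f(0)=0$, $K$ above the graph, and $c=p+\rho e_n$. The inclusion of the ball in $K$ gives $f(y)\le\rho-\sqrt{\rho^2-|y|^2}$. So $\partial K$ bends \emph{at most} as much as the sphere: every normal curvature at $p$ is bounded \emph{above} by $1/\rho$, i.e.\ $\rho_i(K,p,\tau)\ge\rho$ in every direction $\tau$. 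There is no lower bound at all. A flat piece of boundary satisfies the inner ball condition with every radius. Hence a dense, residual, or full-measure set of points with vanishing lower curvature is fully compatible with the ball condition and gives no contradiction. The obstacle you singled out (density versus ``almost everywhere'') is therefore not the real issue. Zamfirescu's ``curvature zero almost everywhere'' theorem is irrelevant for the same reason.

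What you need is the opposite half of Zamfirescu's theorem (cf.\ Section 2.7 of \cite{Schneiderbook}). For most convex bodies $K$, the set of $x\in\partial K$ at which $\rho_i(K,x,\tau)=0$ (upper curvature $+\infty$) for every tangent direction $\tau$ is residual in $\partial K$. Since $\partial K$ is a Baire space, this set meets every non-empty relatively open $V$, contradicting $\rho_i\ge\rho$ on $V$.

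Alternatively, your fallback Baire argument is sound and avoids curvature altogether, but drop the smoothing. Note that $P+\epsilon B^n$ actually satisfies an inner ball condition of radius $\epsilon$ everywhere, so it only destroys radii larger than $\epsilon$. Define $F_{m,k}$ using $\partial K\cap\bm{U}(x,1/k)$, and get closedness by extracting convergent ball centres. Get density of the complement, in the space of all convex bodies, from polytopes whose facets have diameter $<1/k$. At a point of a face of dimension $\le n-2$ the supporting hyperplane is not unique, so no ball of positive radius inside $P$ touches there. Restricting to the $C^1$ bodies as you already do then finishes the proof.
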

\begin{proof}
By \cite{Schneider79} a generic $ C^1 $ regular convex body $ K $ fails to satisfy an inner uniform ball condition on each open subset of $ \partial K $. Hence the conclusion follows from Theorem \ref{main th}.
\end{proof}


\medskip

\end{document}